\renewcommand{\thefootnote}{\fnsymbol{footnote}}
\long\def\sfootnote[#1]#2{\begingroup
\def\thefootnote{\fnsymbol{footnote}}\footnote[#1]{#2}\endgroup}
\newtheorem{theorem}{Theorem}
\newtheorem{lemma}[theorem]{Lemma}
\newtheorem{corollary}[theorem]{Corollary}
\newenvironment{proof}{\noindent\mbox{\bf Proof.}}
{\hfill\mbox{\ding{111}}\bigskip}
\begin{document}
\pagestyle{fancy}
\lhead[{\sl Axiomatizing Mathematical Theories:  Multiplication}]{\thepage}
\chead[]{}
\rhead[\thepage]{\sl Saeed Salehi}
\lfoot[]{}
\cfoot[]{}
\rfoot[]{}
\renewcommand{\headrulewidth}{0pt}
\renewcommand{\footrulewidth}{0pt}
\thispagestyle{empty}

\begin{center}
{\Large Axiomatizing Mathematical Theories:   Multiplication}\footnote{Published in:  {\sc A. Kamali-Nejad} (ed.) {\em 
Proceedings of Frontiers in Mathematical Sciences}, Sharif University of Technology, 25--27 December 2012, Tehran, 
Fundamental Education Publications (Iran 2012) pages 165--176.}

\bigskip

{\sc Saeed Salehi}\footnote{The author is partially supported by grant  N$^{\underline{\rm o}}$ 91030033 of the Institute for Research in Fundamental Sciences ($\bigcirc\!\!\!\!/\!\!\!\bullet\!\!\!/\!\!\!\!\bigcirc$\ $\mathbb{I}\mathbb{P}\mathbb{M}$), Niavaran, Tehran, Iran.}

Department of Mathematical Sciences, University of Tabriz, 29 Bahman Boulevard,

P.O.Box 51666--17766, Tabriz, Iran.

School of Mathematics,
              Institute for Research in Fundamental Sciences (IPM),

P.O.Box 19395--5746,  Niavaran, Tehran, Iran.

{\tt http://saeedsalehi.ir/ \qquad  salehipour}@{\tt tabrizu.ac.ir}

\end{center}

\begin{abstract}
Axiomatizing mathematical structures is a goal of Mathematical Logic.
Axiomatizability of the theories of some structures have turned out to be
quite difficult and challenging,
and some remain open. However  axiomatization of some mathematical structures
are now classical theorems in Logic, Algebra and Geometry. In this paper we will
study the axiomatizability of the theories of  multiplication  in the domains of natural,
integer, rational, real, and complex numbers. We will review some classical theorems,
and will give some new proofs for old results. We will see that some structures are
missing in the literature, thus leaving it open whether the theories of that
structures are axiomatizable (decidable) or not. We will answer one of those open
 questions in this paper.

\noindent {\bf 2010 MSC}:   03B25,
03C10,  03D35, 03F40,  11U05, 12L05.

\noindent {\bf Keywords}: Decidability,
Axiomatization,
Quantifier Elimination.

\end{abstract}

\section{Introduction \hfill\fbox{\small {\sl Dedicated to Professor {\sc Siavash Shahshahani} for his 70th Birthday.}}}\label{intro}
Classic Algebraic Geometry is about studying properties of the sets
of zeros of polynomials; the word geometry refers to the set of
zeros of (a system of) polynomials. Studying the systems of linear
equations with one variable is a subject of Linea Algebra, and
studying polynomial equations in one variable is a subject of
Algebra. In Algebraic Geometry these two areas are combined by
studying systems of polynomials in several variables.
Chevalley--Tarski Theorem in algebraic geometry states that the
projection of a constructible set is constructible;  in mathematical
logic this theorem  implies that the theory of  algebraically closed
fields, or equivalently the theory of the structure
$\langle\mathbb{C},0,1,+,-,\cdot\rangle$, admits quantifier
elimination. As a result the theory of the structure
$\langle\mathbb{C},+,\cdot\rangle$ is decidable and can be
axiomatized as an \textsl{algebraically closed field}. A set $D$ is called
decidable when there exists an algorithm which for a given input $x$
outputs {\tt yes} if $x\in D$ and outputs {\tt no} if $x\not\in D$.
Tarski--Seidenberg Theorem (or Tarski--Seidenberg Principle) in
algebraic geometry states that the projection of a  semialgebraic
set is a semialgebraic set;  in mathematical logic this theorem
implies that the theory of the real closed (ordered) fields, or
equivalently the theory of the structure
$\langle\mathbb{R},0,1,+,-,\cdot,<\rangle$, admits quantifier
elimination. As a result  the theory of the structure
$\langle\mathbb{R},+,\cdot\rangle$ is decidable and can be
axiomatized as a \textsl{real closed (ordered) field}.   Let us note that
ordering can be defined by addition and multiplication in
$\mathbb{R}$:  for real $a,b$ we have $a\!\leqslant\!b$ if and only
if $\exists x (a\!+\!x^2\!=\!b)$.

One of the most surprising and most significant results in the
twentieth century mathematics was G\"odel's Incompleteness Theorem.
A semantical reading of this theorem is that the the additive and
multiplicative  theory of  the natural numbers,
$\langle\mathbb{N},+,\cdot\rangle$, is not decidable. Undecidability
of the theory of the structure $\langle\mathbb{N},+,\cdot\rangle$
means that there is no algorithm which for a given first order
sentence $\varphi$ in the language $\{+,\cdot\}$ (as input) can
decide (output {\tt yes}) if $\mathbb{N}\models\varphi$ or not
(output {\tt no} if $\mathbb{N}\not\models\varphi$). The theory of
 $\langle\mathbb{Z},+,\cdot\rangle$ is not decidable
either because $\mathbb{N}$ is definable in it, and also the theory
of  $\langle\mathbb{Q},+,\cdot\rangle$ is undecidable
since $\mathbb{Z}$ is definable in it.

One surprising aspect of G\"odel's 1931 incompleteness theorem
(undecidability of the theory $\langle\mathbb{N},+,\cdot\rangle$)
was that by the results of Presburger and Skolem the additive theory
of the natural numbers without multiplication, $\langle\mathbb{N},+\rangle$,
and the multiplicative theory of the natural numbers without addition,
$\langle\mathbb{N},\cdot\rangle$, were proved (or announced) to be decidable by 1930.
So, the theory of the structures $\langle\mathbb{N},+,\cdot\rangle$ was expected to be
decidable, which turned out not to be so by Kurt G\"odel. Indeed the theories of the
structures $\langle\mathbb{Z},+\rangle$ and $\langle\mathbb{Z},\cdot\rangle$ are
also decidable, and the theories of the structures
$\langle\mathbb{R},+\rangle$, $\langle\mathbb{R},\cdot\rangle$,
$\langle\mathbb{C},+\rangle$ and $\langle\mathbb{C},\cdot\rangle$ are
evidently decidable by the above mentioned theorems of Tarski
(and Chevalley and Seidenberg). Our aim is to study the following structures of
 multiplication  in the domains of $\mathbb{N},\,\mathbb{Z},\,\mathbb{Q},\,\mathbb{R}$ and $\mathbb{C}$.\\[-1.5em]
\begin{center}
\begin{tabular}{|c||c|c|c|c|c|}
\hline
         & $\mathbb{N}$ & $\mathbb{Z}$ & $\mathbb{Q}$ & $\mathbb{R}$ &
         $\mathbb{C}$ \\
\hline
 \hline
        $\{\cdot\}$ & $\langle\mathbb{N},\cdot\rangle$ & $\langle\mathbb{Z},\cdot\rangle$
        & $\langle\mathbb{Q},\cdot\rangle$ & $\langle\mathbb{R},\cdot\rangle$ &
        $\langle\mathbb{C},\cdot\rangle$ \\
  \hline
                $\{+,\cdot\}$ & $\langle\mathbb{N},+,\cdot\rangle$ &
                 $\langle\mathbb{Z},+,\cdot\rangle$
        & $\langle\mathbb{Q},+,\cdot\rangle$ & $\langle\mathbb{R},+,\cdot\rangle$ &
        $\langle\mathbb{C},+,\cdot\rangle$ \\
  \hline
            \end{tabular}
\end{center}
Though the theory of the structures $\langle\mathbb{R},\cdot\rangle$
and $\langle\mathbb{C},\cdot\rangle$ are  decidable by Tarski's
theorem (decidability of the theories of
$\langle\mathbb{R},+,\cdot\rangle$ and
$\langle\mathbb{C},+,\cdot\rangle$), we will prove this fact
directly (without appealing to Tarski's results) by quantifier
elimination. Here, the decidability or undecidability of the
structure $\langle\mathbb{Q},\cdot\rangle$ seems to be missing in
the literature. The theory $\langle\mathbb{Q},+,\cdot\rangle$ is not
decidable, so one cannot immediately infer the decidability of the
theory of $\langle\mathbb{Q},\cdot\rangle$. In this paper we show
the decidability of the theory of this structure by the method of
quantifier elimination. Whence, we will give some nice
characterizations for the theory of multiplication in the fields of
rational, real, and complex numbers. The status of decidability of
the theories of the above mentioned structures has been summarized
 in the below table, where  decidable theories are indicated by $\Delta_1$
 and undecidable theories by $\Delta_1\!\!\!\!\!\!/\!\!\!\!\backslash$   \ ).\\[-1.5em]
\begin{center}
\qquad \begin{tabular}{|c||c|c|c|c|c|} \hline
         & $\mathbb{N}$ & $\mathbb{Z}$ & $\mathbb{Q}$ & $\mathbb{R}$ &
         $\mathbb{C}$ \\
\hline
  \hline
          $\{\cdot\}$ & $\Delta_1$ & $\Delta_1$
        & $\Delta_1$ & $\Delta_1$ &
        $\Delta_1$ \\
  \hline
          $\{+,\cdot\}$ & $\!\!\!\!\!\Delta_1\!\!\!\!\!\!/\!\!\!\!\backslash$
          & $\!\!\!\!\!\Delta_1\!\!\!\!\!\!/\!\!\!\!\backslash$
        & $\!\!\!\!\!\Delta_1\!\!\!\!\!\!/\!\!\!\!\backslash$ & $\Delta_1$ &
        $\Delta_1$ \\
  \hline
            \end{tabular}
\end{center}

\section{The Theory of Multiplication}

\subsection{Natural and Integer Numbers}
There are some model--theoretic (and advanced) proofs for the
decidability of the theory of $\langle\mathbb{N},\cdot\rangle$ in
the literature, the most elementary (and in the author's opinion,
the most interesting) proof is the one given by Patrick Cegielski in
\cite{cegielski} which uses the technique of quantifier elimination.
Unfortunately, there is no English version of this proof (see
\cite{smorynski} for an overview of it). As far as the author knows,
there is no direct reference to the decidability of the theory of
the structure $\langle\mathbb{Z},\cdot\rangle$ in the literature,
but somehow it seems that Cegielski's quantifier elimination proof
can also be adopted to the case of $\mathbb{Z}$. Thus the
multiplicative theory of $\mathbb{Z}$ is most likely decidable, and
most likely can be proved by the method of quantifier elimination.

\subsection{Rational Numbers}

Let us note that the language $\{\cdot\}$ does not
 allow quantifier elimination for $\langle\mathbb{Q},\cdot\rangle$, since e.g.
 the formula $\exists y[x=y^2]$ is not equivalent to a quantifier--free formula. So,
 we first restrict our attention to the set of positive rational
  numbers $\mathbb{Q}^+=\{r\!\in\!\mathbb{Q}\mid r\!>\!0\}$ and extend the
 language to $\mathcal{L}=\langle 1,\cdot, ^{-1},R_2,R_3,\ldots\rangle$, where $R_n$
 is interpreted as ``being the $n$th power of a rational"; or in the other words
  $R_n(x)\equiv \exists y[x=y^n]$.
We note that the quantifier--free formulas of $\mathcal{L}$ are
decidable: for any given rational number $r$ and any natural $n$ one
can decide if $r$ is an $n$th power of (an-)other rational number or
not. Thus, quantifier elimination in
$\langle\mathbb{Q}^+,\mathcal{L}\rangle$ implies the decidability of
the theory of the structure
$\langle\mathbb{Q}^+,\mathcal{L}\rangle$, and hence decidability of
the theory of  $\langle\mathbb{Q}^+,\cdot\rangle$. We will need the
following general form of the Chinese Remainder Theorem
(\cite{ore}).
\begin{theorem}[General Chinese Remainder]\label{th-crt}
The system $\{x\equiv_{n_i}u_i\}_{i=1}^{l}$ of congruence equations
has a solution in $\mathbb{Z}$ if and only if for every $i\not=j$,
$u_i\equiv_{(n_i,n_j)}u_j$ where $(n_i,n_j)$ is the greatest common
divisor of $n_i$ and $n_j$. Moreover the solution $x_0$ {\rm (}if
exists{\rm )} is unique module $n=[n_1,\ldots,n_l]$ {\rm (}the least
common multiple of $n_i$'s{\rm )}, and so all the solutions will be
of the form $N\!\cdot\!n+x_0$ for arbitrary $N\in\mathbb{Z}$.
\end{theorem}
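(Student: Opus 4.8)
The plan is to prove the two directions separately, treating the harder existence direction by induction on the number $l$ of congruences, with Bézout's identity powering the two-congruence case.

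For the necessity of the compatibility conditions (the ``only if'' direction) I would argue directly and need nothing deep: if some $x$ satisfies $x\equiv_{n_i}u_i$ for all $i$, then for any pair $i\neq j$ the number $(n_i,n_j)$ divides both $n_i$ and $n_j$, so $a\equiv_n b$ and $d\mid n$ together yield $x\equiv_{(n_i,n_j)}u_i$ and $x\equiv_{(n_i,n_j)}u_j$; transitivity of congruence then gives $u_i\equiv_{(n_i,n_j)}u_j$. For sufficiency, I would first settle the case $l=2$ via Bézout: writing $d=(n_1,n_2)=an_1+bn_2$ and using $d\mid(u_2-u_1)$, say $u_2-u_1=dk$, one checks that $x_0=u_1+an_1k$ satisfies both congruences (indeed $x_0-u_1=an_1k$ is a multiple of $n_1$, and $x_0-u_2=-kbn_2$ is a multiple of $n_2$).

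The inductive step (base case $l=1$ being trivial) proceeds by replacing the first $l-1$ congruences with the single equivalent congruence $x\equiv_m x_0$, where $m=[n_1,\ldots,n_{l-1}]$ and $x_0$ is the solution supplied by the inductive hypothesis (unique modulo $m$), and then fusing this with $x\equiv_{n_l}u_l$ through the already-proven $l=2$ case. The crux — and the step I expect to be the main obstacle — is verifying the single compatibility condition required by that fusion, namely $x_0\equiv_{(m,n_l)}u_l$. This rests on the gcd--lcm distributive law $(m,n_l)=\big[(n_1,n_l),\ldots,(n_{l-1},n_l)\big]$. Granting it, it suffices to check $x_0\equiv_{(n_i,n_l)}u_l$ for each $i<l$: from $x_0\equiv_{n_i}u_i$ we get $x_0\equiv_{(n_i,n_l)}u_i$, the hypothesis gives $u_i\equiv_{(n_i,n_l)}u_l$, and transitivity closes the gap. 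I would establish the distributive identity prime-by-prime, comparing $p$-adic valuations and using that the valuation of an lcm is the maximum and that of a gcd the minimum of the valuations, together with the lattice identity $\min(\max_i a_i,\,b)=\max_i\min(a_i,\,b)$.

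Finally the uniqueness-and-parametrization clause is immediate and requires no new idea: if $x$ and $x'$ both solve the system then $n_i\mid(x-x')$ for every $i$, so their least common multiple $n=[n_1,\ldots,n_l]$ divides $x-x'$, i.e. $x\equiv_n x'$; conversely every integer of the form $Nn+x_0$ plainly satisfies all the congruences, so the complete solution set is exactly $\{Nn+x_0\mid N\in\mathbb{Z}\}$.
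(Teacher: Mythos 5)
Your proposal is correct, but it takes a genuinely different route from the paper. The paper proves existence in one shot with an explicit formula: since the integers $n/n_1,\ldots,n/n_l$ have greatest common divisor $1$, it picks $c_1,\ldots,c_l$ with $\sum_i c_i(n/n_i)=1$ and sets $x_0=\sum_i u_i c_i(n/n_i)$, then verifies $x_0\equiv_{n_j}u_j$ by a direct computation using $(n_i,n_j)\mid u_i-u_j$ and the identity $(n_i,n_j)[n_i,n_j]=n_in_j$. You instead induct on $l$, fusing two congruences at a time via B\'ezout, which forces you to prove the gcd--lcm distributive law $\big([n_1,\ldots,n_{l-1}],n_l\big)=\big[(n_1,n_l),\ldots,(n_{l-1},n_l)\big]$ by comparing $p$-adic valuations. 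Both arguments are sound (your $l=2$ computation and the lattice identity $\min(\max_i a_i,b)=\max_i\min(a_i,b)$ check out, and you correctly use the full inductive hypothesis, including uniqueness modulo $[n_1,\ldots,n_{l-1}]$, to collapse the first $l-1$ congruences into one). The trade-off: your induction avoids any clever ansatz and needs only the two-modulus case plus a standard valuation lemma, whereas the paper's construction yields a closed-form solution $x_0=\sum_i u_ic_i(n/n_i)$ in terms of the B\'ezout coefficients $c_i$ for the $n/n_i$'s. That closed form is not incidental here: Lemma~\ref{lem-rn} and Lemma~\ref{lem-rm} later transplant exactly this formula into the multiplicative setting as $x_0=\prod_i(u_i)^{-c_i\cdot n/n_i}$ and cite ``the proof of Theorem~\ref{th-crt}'' for the congruences on exponents, so an existence-only inductive proof would leave those later arguments without the explicit witness they rely on.
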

\begin{proof} If $x_0$
satisfies $\bigwedge_i x_0\equiv_{n_i}u_i$ then for any $i\not=j$ we will
have $x_0\equiv_{(n_i,n_j)}u_i$ and $x_0\equiv_{(n_i,n_j)}u_j$, whence
$u_i\equiv_{(n_i,n_j)}u_j$.
Conversely, if $\bigwedge_{i\not=j}u_i\equiv_{(n_i,n_j)}u_j$ then we
show the existence of some $x_0$ which satisfies $\bigwedge_i x_0\equiv_{n_i}u_i$.
Then of course every number $x=N\!\cdot\!n+x_0$ satisfies the system of equations
 $\bigwedge_i x\equiv_{n_i}u_i$  as well, and every solution $y$ of the equations
  $\bigwedge_i y\equiv_{n_i}u_i$ satisfies $\bigwedge_i x_0\equiv_{n_i}y$, and so
  $x_0\equiv_ny$, therefore $y=N\cdot n+x_0$ for some $N\in\mathbb{N}$.
Since the greatest common divisor of the numbers $\{n/n_1,\cdots,n/n_l\}$ is $1$,
 there are   $c_1,\ldots,c_l$ such that $\sum_{i}^{}c_i(n/n_i)=1$.
Since for any $i\not=j$, $(n_i,n_j)\mid u_i-u_j$ and $[n_i,n_j] \mid [n_1,\ldots,n_l]=n$
there are $d_{i,j}$ and $e_{i,j}$ such that $u_i-u_j=d_{i,j}(n_i,n_j)$
 and $n=e_{i,j}[n_i,n_j]$. Whence,  by $(n_i,n_j)[n_i,n_j]=n_in_j$ we have
$(u_i-u_j)n/n_i=d_{i,j}\!\cdot\!(n_i,n_j)\!\cdot\!e_{i,j}\!\cdot\![n_i,n_j]/n_i=
d_{i,j}\!\cdot\!e_{i,j}\!\cdot\!n_j$.
Put $x_0=\sum_{i}u_i\!\cdot\!c_i\!\cdot\!(n/n_i)$. Then \\[-2em]
\begin{center}
\begin{tabular}{lll}
$x_0$ & $=$ & $u_jc_jn/n_j+\sum_{i\not=j}u_i\!\cdot\!c_i\!\cdot\!(n/n_i)$ \\
 & $=$ & $u_jc_jn/n_j+\sum_{i\not=j}(u_i-u_j)\!\cdot\!c_i\!\cdot\!(n/n_i)
 +\sum_{i\not=j}u_j\!\cdot\!c_i\!\cdot\!(n/n_i)$ \\
 & $=$ & $u_j\!\cdot\!\sum_ic_i\!\cdot\!(n/n_i)+
 \sum_{i\not=j}(u_i-u_j)\!\cdot\!c_i\!\cdot\!(n/n_i)$\\
 & $=$ & $u_j+\sum_{i\not=j}c_i\!\cdot\!(u_i-u_j)\!\cdot\!(n/n_i)$ \\
  & $=$ & $u_j+\sum_{i\not=j}c_i\!\cdot\!d_{i,j}\!\cdot\!e_{i,j}\!\cdot\!n_j$ \\
    & $=$ & $u_j+n_j\!\cdot\!\sum_{i\not=j}c_i\!\cdot\!d_{i,j}\!\cdot\!e_{i,j},$ \\
         \end{tabular}
\end{center}
which implies the desired conclusion $x_0\equiv_{n_j}u_j$ (for every $j=1,\ldots,l$).
\end{proof}

\begin{lemma}\label{lem-rn}
The system of relations $\{R_{n_i}(u_i\!\cdot\!x)\}_{i=1}^{l}$ {\rm
(}recall that $R_n(x)\equiv \exists y[x=y^n]${\rm )} has a solution
in $\mathbb{Q}^+$ if and only if for every $i\not=j$,
$R_{(n_i,n_j)}(u_i\!\cdot\!u_j^{-1})$ holds where $(n_i,n_j)$ is the
greatest common divisor of $n_i$ and $n_j$. Moreover if
$\bigwedge_{i\not=j}R_{(n_i,n_j)}(u_i\!\cdot\!u_j^{-1})$ holds  then
for  $n=[n_1,\ldots,n_l]$ and for some fixed
$c_1,\ldots,c_l\in\mathbb{Z}$ which satisfy
$\sum_ic_i\!\cdot\!n/n_i=1$   all of the solutions  are of the form
$w^n\!\cdot\!\prod_{i=1}^{l}(u_i)^{-c_i\cdot n/n_i}$ for arbitrary
$w\in\mathbb{Q}$.
\end{lemma}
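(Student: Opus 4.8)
The plan is to transfer everything to the additive world of $p$-adic valuations, where the multiplicative predicates $R_n$ become divisibility conditions and the whole lemma collapses to a prime-by-prime application of the General Chinese Remainder Theorem (Theorem~\ref{th-crt}). Recall that every $r\in\mathbb{Q}^+$ has a unique factorization $r=\prod_p p^{v_p(r)}$ with $v_p(r)\in\mathbb{Z}$ and $v_p(r)=0$ for all but finitely many primes $p$. Since $x=y^n$ forces $v_p(x)=n\!\cdot\!v_p(y)$, the predicate $R_n(x)$ is equivalent to the assertion that $n\mid v_p(x)$ for every prime $p$. Consequently $R_{n_i}(u_i\!\cdot\!x)$ translates into $v_p(x)\equiv -v_p(u_i)\pmod{n_i}$ for every $p$, and the entire system $\{R_{n_i}(u_i\!\cdot\!x)\}_{i=1}^{l}$ becomes, for each fixed prime $p$, a system of $l$ congruence equations on the single integer $v_p(x)$.

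First I would record this translation explicitly and note that these per-prime systems are independent of one another and that only finitely many of them are nontrivial (namely those $p$ dividing some numerator or denominator of some $u_i$). Then, applying Theorem~\ref{th-crt} to the system $v_p(x)\equiv -v_p(u_i)\pmod{n_i}$, solvability for a fixed $p$ is equivalent to $-v_p(u_i)\equiv -v_p(u_j)\pmod{(n_i,n_j)}$ for all $i\neq j$, i.e.\ $v_p(u_i)-v_p(u_j)\equiv 0\pmod{(n_i,n_j)}$. Quantifying over all $p$, this is precisely the statement $\bigwedge_{i\neq j}R_{(n_i,n_j)}(u_i\!\cdot\!u_j^{-1})$, since $v_p(u_i\!\cdot\!u_j^{-1})=v_p(u_i)-v_p(u_j)$. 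For the forward direction a solution $x$ yields these congruences prime by prime; for the converse I would assemble a solution by solving each nontrivial prime's system (possible by Theorem~\ref{th-crt}) and setting $v_p(x)=0$ at all remaining primes, so that the resulting exponent sequence has finite support and $x=\prod_p p^{v_p(x)}$ is a genuine element of $\mathbb{Q}^+$.

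For the \emph{moreover} clause I would feed the explicit solution formula of Theorem~\ref{th-crt} back through the factorization. Fixing $c_1,\dots,c_l$ with $\sum_i c_i(n/n_i)=1$, the general solution of the $p$th congruence system is $v_p(x)=N_p\,n-\sum_i v_p(u_i)\,c_i\,(n/n_i)$ for arbitrary $N_p\in\mathbb{Z}$, the particular part coming from the theorem's formula applied to the residues $-v_p(u_i)$ and the homogeneous part being the multiples of $n=[n_1,\dots,n_l]$. Reassembling, $\prod_p p^{N_p n}=w^n$ with $w=\prod_p p^{N_p}$ ranging over $\mathbb{Q}^+$, while $\prod_p p^{-\sum_i v_p(u_i)c_i(n/n_i)}=\prod_i u_i^{-c_i\,(n/n_i)}$; multiplying the two factors yields exactly $x=w^n\!\cdot\!\prod_{i=1}^{l}u_i^{-c_i\,(n/n_i)}$, as claimed.

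The routine parts here are the valuation bookkeeping and the sign matching in the particular solution; the one point genuinely requiring care is the passage from the infinitely many per-prime congruence systems to a single rational solution. I expect this to be the main obstacle only in the sense that it must be argued cleanly: one has to check that the compatibility hypothesis, stated uniformly over all primes, simultaneously guarantees solvability at every prime while forcing triviality (hence the choice $v_p(x)=0$) at all but finitely many of them, so that the constructed $x$ is an honest element of $\mathbb{Q}^+$ with finite support rather than a formal infinite product.
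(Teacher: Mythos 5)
Your proposal is correct and follows essentially the same route as the paper: both pass to the exponents of each prime in the unique factorization of a positive rational, turn each $R_{n_i}(u_i\cdot x)$ into the per-prime congruence $v_p(x)\equiv -v_p(u_i)\pmod{n_i}$, invoke the compatibility criterion and the explicit solution formula of Theorem~\ref{th-crt}, and reassemble the particular solution into $x_0=\prod_i u_i^{-c_i\,n/n_i}$ with the homogeneous part giving the factor $w^n$. The only cosmetic difference is that the paper handles the forward implication by a direct multiplicative argument rather than via valuations, and the finite-support worry you flag is discharged automatically by the explicit formula for $x_0$, which is a finite product of rationals.
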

\begin{proof}
Clearly, if $R_{n_i}(u_i\!\cdot\!x)$ and  $R_{n_j}(u_j\!\cdot\!x)$
then $R_{(n_i,n_j)}(u_i\!\cdot\!x)$ and
$R_{(n_i,n_j)}(u_j^{-1}\!\cdot\!x^{-1})$, and so
$R_{(n_i,n_j)}(u_i\!\cdot\!u_j^{-1})$. Conversely, if
$\bigwedge_{i\not=j}R_{(n_i,n_j)}(u_i\!\cdot\!u_j^{-1})$ holds and
$n=[n_1,\ldots,n_l]$ then since the greatest common divisor of
$n/n_i$'s is $1$ there are some $c_1,\ldots,c_l$ such that
$\sum_ic_i\!\cdot\!n/n_i=1$. We show that
$x_0=\prod_{i=1}^{l}(u_i)^{-c_i\cdot n/n_i}$ satisfies
$\bigwedge_iR_{n_i}(u_i\!\cdot\!x_0)$. Note that every rational
number  can be uniquely factorized into a product of some (powers
of) primes $p_1^{\beta_1}p_2^{\beta_2}\cdots p_m^{\beta_m}$ where
$\beta_1,\beta_2,\ldots,\beta_m\in\mathbb{Z}$ (for example
$24/45=2^{3}3^{-1}5^{-1}$). For  a fix prime ${\sf p}$,  assume the
exponents of ${\sf p}$ in the unique factorizations of
$u_1,\ldots,u_l$  are respectively $\alpha_1,\ldots,\alpha_l$. Then
the exponent of ${\sf p}$ in the unique factorization of $x_0$ will
be $\alpha=\sum_i-c_i\alpha_i(n/n_i)$. Also, by the assumption
$\bigwedge_{i\not=j}R_{(n_i,n_j)}(u_i\!\cdot\!u_j^{-1})$ we have
$\bigwedge_{i\not=j}\alpha_i\equiv_{(n_i,n_j)}\alpha_j$. So, by the
proof of the General Chinese Remainder Theorem~\ref{th-crt},
$\bigwedge_i\alpha\equiv_{n_i}-\alpha_i$, or
$\bigwedge_in_i\mid\alpha_i+\alpha$. This means that the exponent of
(every prime) in the unique factorization of $u_i\cdot x_0$ is a
multiple of $n_i$, whence $R_{n_i}(u_i\!\cdot\!x_0)$ holds (for
$i=1,\ldots,l$). Now assume for $y\in\mathbb{Q}$ the relation
$\bigwedge_iR_{n_i}(u_i\!\cdot\!y)$ holds. Then for any prime ${\sf
p}$, if the exponent of ${\sf p}$ in the unique factorization of $y$
is $\beta$, we have $\bigwedge_i\beta\equiv_{n_i}-\alpha_i$. Whence,
by the  proof of Theorem~\ref{th-crt} we have
$\beta\equiv_n\alpha$, and so
  $y=w^n\!\cdot\!x_0$ holds for some $w\in\mathbb{Q}$.
\end{proof}

\begin{lemma}\label{lem-rm}
The system of relations $\{R_{n_j}(u_j\!\cdot\!x)\}_{j=1}^{l},\{\neg
R_{m_k}(v_k\!\cdot\!x)\}_{k=1}^{\ell}$  has a solution in
$\mathbb{Q}^+$ if and only if
$\bigwedge_{i\not=j}R_{(n_i,n_j)}(u_i\!\cdot\!u_j^{-1})\wedge\bigwedge_{k:m_k\mid
n}\neg R_{m_k}(v_k\!\cdot\!x_0)$ holds where $n=[n_1,\ldots,n_l]$
and $x_0=\prod_{i=1}^{l}(u_i)^{-c_i\cdot n/n_i}$ in which $c_i$'s
satisfy $\sum_ic_i\!\cdot\!n/n_i=1$ {\rm (}by
$(n/n_1,\ldots,n/n_l)=1${\rm )}.
\end{lemma}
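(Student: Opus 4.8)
The plan is to separate the positive and negative requirements and to reduce the whole statement to a question about a single free parameter. By Lemma~\ref{lem-rn}, the positive subsystem $\{R_{n_j}(u_j\!\cdot\!x)\}_{j=1}^{l}$ is solvable in $\mathbb{Q}^+$ if and only if $\bigwedge_{i\neq j}R_{(n_i,n_j)}(u_i\!\cdot\!u_j^{-1})$ holds, and in that case its solution set is exactly $\{w^n\!\cdot\!x_0 : w\in\mathbb{Q}^+\}$ with $n=[n_1,\ldots,n_l]$ and $x_0=\prod_{i=1}^{l}(u_i)^{-c_i\cdot n/n_i}$. Hence the full system is solvable precisely when the first conjunct holds \emph{and} there is some $w\in\mathbb{Q}^+$ with $\neg R_{m_k}(v_k\!\cdot\!w^n\!\cdot\!x_0)$ for every $k$. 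So I would first record this reduction and then analyze, for fixed positive data satisfying the first conjunct, exactly which of the constraints $\neg R_{m_k}(v_k\!\cdot\!w^n\!\cdot\!x_0)$ depend on $w$ at all.

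The key observation is the invariance $R_m(a\!\cdot\!b^m)\Leftrightarrow R_m(a)$, valid in $\mathbb{Q}^+$ because multiplying or dividing by a perfect $m$th power preserves being a perfect $m$th power. For each $k$ with $m_k\mid n$ I would write $w^n=(w^{n/m_k})^{m_k}$ and conclude that $R_{m_k}(v_k\!\cdot\!w^n\!\cdot\!x_0)$ holds if and only if $R_{m_k}(v_k\!\cdot\!x_0)$ holds, independently of $w$. These indices therefore contribute genuine, unavoidable constraints: the full system can be solvable only if $\neg R_{m_k}(v_k\!\cdot\!x_0)$ holds for every such $k$, which is exactly the second conjunct, and once it holds these constraints are met by every admissible $x$. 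This simultaneously proves necessity of the second conjunct (the forward direction) and reduces the converse to producing a $w$ that defeats only the remaining indices, those with $m_k\nmid n$.

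For the converse I would exhibit a witness explicitly. Choose a prime $p$ dividing neither the numerator nor the denominator of any $u_i$ or $v_k$, and set $w=p$, i.e.\ $x=p^n\!\cdot\!x_0$. Since $x$ has the form $w^n\!\cdot\!x_0$ it still solves the positive subsystem. For every $k$ the exponent of $p$ in the unique factorization of $v_k\!\cdot\!p^n\!\cdot\!x_0$ equals $n$, because $p$ occurs in neither $v_k$ nor $x_0$; when $m_k\nmid n$ this exponent is not a multiple of $m_k$, so by unique factorization $v_k\!\cdot\!p^n\!\cdot\!x_0$ is not a perfect $m_k$th power and $\neg R_{m_k}(v_k\!\cdot\!x)$ holds. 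A single fresh prime thus defeats all the $m_k\nmid n$ constraints at once, while for the $m_k\mid n$ constraints the added factor $p^n$ changes nothing (its exponent $n$ is a multiple of $m_k$), so those remain governed by the already-assumed $\neg R_{m_k}(v_k\!\cdot\!x_0)$. Hence $x=p^n\!\cdot\!x_0$ solves the full system.

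The conceptual heart is the dichotomy $m_k\mid n$ versus $m_k\nmid n$: only the divisor case yields a parameter-free obstruction, and recognising that the non-divisor case is \emph{always} avoidable is what makes the elimination go through. I expect the most delicate point to be verifying that introducing the fresh prime at exponent exactly $n$ neither destroys solvability of the positive part nor disturbs the $m_k\mid n$ constraints; this is precisely why the prime enters at the power $n$, a common multiple of all the relevant $m_k$. Everything beyond this reuses the prime-exponent bookkeeping and the congruence computation already carried out in the proofs of Lemma~\ref{lem-rn} and Theorem~\ref{th-crt}, so no new technical machinery is needed.
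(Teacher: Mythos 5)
Your proposal is correct and follows essentially the same route as the paper: reduce via Lemma~\ref{lem-rn} to solutions of the form $w^n\!\cdot\!x_0$, observe that for $m_k\mid n$ the constraint $\neg R_{m_k}(v_k\!\cdot\!w^n\!\cdot\!x_0)$ is $w$-independent and equivalent to $\neg R_{m_k}(v_k\!\cdot\!x_0)$, and defeat the $m_k\nmid n$ constraints with the witness $x={\sf P}^n\!\cdot\!x_0$ for a fresh prime ${\sf P}$. The only difference is organizational (you fold both directions of the $m_k\mid n$ case into a single invariance observation), not mathematical.
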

\begin{proof} Suppose the system
$\{R_{n_j}(u_j\!\cdot\!x)\}_{j=1}^{l},\{\neg R_{m_k}(v_k\!\cdot\!x)\}_{k=1}^{\ell}$
has a solution (for $x$) in $\mathbb{Q}^+$. Then by Lemma~\ref{lem-rn},
 $\bigwedge_{i\not=j}R_{(n_i,n_j)}(u_i\!\cdot\!u_j^{-1})$ holds, and moreover
 $x$ is of the form $w^n\!\cdot\!x_0$ for some $w\in\mathbb{Q}^+$. We show that
  $\bigwedge_{k:m_k\mid n}\neg R_{m_k}(v_k\!\cdot\!x_0)$ holds too. Suppose
   $m_k\mid n$. Then $v_k\!\cdot\!x=v_k\!\cdot\!w^n\!\cdot\!x_0$, and so by $R_{m_k}(w^n)$
    and $\neg R_{m_k}(v_k\!\cdot\!x)$ we infer that $\neg R_{m_k}(v_k\!\cdot\!x_0)$.
    Conversely,
    suppose
    \newline\centerline{$\bigwedge_{i\not=j}R_{(n_i,n_j)}(u_i\!\cdot\!u_j^{-1})
    \wedge\bigwedge_{k:m_k\mid n}\neg R_{m_k}(v_k\!\cdot\!x_0)$.}
    Then by Lemma~\ref{lem-rn} for any $w\in\mathbb{Q}^+$ the number
    $x=w^n\!\cdot\!x_0$ satisfies $\bigwedge_j{R_{n_j}(u_j\!\cdot\!x)}$.
    We choose a suitable $w$ for which $x=w^n\!\cdot\!x_0$ also satisfies
    $\bigwedge_{k}\neg R_{m_k}(v_k\!\cdot\!x)$. Choose ${\sf P}$ be a prime
    number which does not appear in the  (unique) factorization  of any of
     $u_1$ or $u_2$ or $\ldots$ or $u_l$ or $v_1$ or $\ldots$ or $v_\ell$. Now
     we show that $x={\sf P}^n\!\cdot\!x_0$ satisfies
     $\bigwedge_{k}\neg R_{m_k}(v_k\!\cdot\!x)$:

\noindent (i) If $m_k\mid n$ then $\neg R_{m_k}(v_k\!\cdot\!x_0)$
and $R_{m_k}({\sf P}^n)$; whence $\neg R_{m_k}(v_k\!\cdot\!x)$.

\noindent (ii) If $m_k\nmid n$, then $\neg R_{m_k}({\sf P}^n)$ and
 so $\neg R_{m_k}(v_k\!\cdot\!x)$, because the prime number ${\sf P}$ does not appear in
 the unique factorization of $x_0$ or $v_k$
 (if we had $R_{m_k}(v_k\!\cdot\!x)\equiv R_{m_k}(v_k\!\cdot\!x_0\!\cdot\!{\sf P}^n)$
 then we must have had $R_{m_k}({\sf P}^n)$ or $m_k\mid n$).
\end{proof}

\begin{theorem}\label{th-q1}
The theory of the structure $\langle\mathbb{Q}^+,1,\cdot,
^{-1},R_2,R_3,\ldots\rangle$ admits quantifier elimination.
\end{theorem}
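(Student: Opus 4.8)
The plan is to establish quantifier elimination in the usual way: it suffices to eliminate a single existential quantifier from a formula $\exists x\,\Phi$ in which $\Phi$ is a conjunction of literals. Indeed, passing to prenex form, treating $\forall$ as $\neg\exists\neg$, and putting the matrix under an innermost $\exists x$ into disjunctive normal form reduces everything to this case, since $\exists$ commutes with $\vee$. First I would normalise the terms: in the abelian group $\langle\mathbb{Q}^+,1,\cdot,^{-1}\rangle$ every term is provably equal to a Laurent monomial $\prod_j y_j^{a_j}$ with $a_j\in\mathbb{Z}$, so after isolating $x$ each literal of $\Phi$ has one of the forms $x^{c}\cdot r=1$, $R_n(x^{a}\cdot t)$, or $\neg R_m(x^{b}\cdot s)$, where $r,t,s$ are $x$-free monomials and the exponents are integers. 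Using $R_n(y)\iff R_n(y^{-1})$ I may assume all exponents of $x$ occurring in $R$-literals are positive, and I split the argument according to whether or not some equality literal genuinely involves $x$.

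Next I treat the case in which $\Phi$ contains no equality in $x$, so that $\Phi$ is a conjunction of literals $R_{n_i}(x^{a_i}t_i)$ and $\neg R_{m_k}(x^{b_k}s_k)$ with all $a_i,b_k>0$. The key move is to make the power of $x$ uniform by raising it, not lowering it. Let $A$ be a common multiple of all the $a_i$ and $b_k$. A direct valuation check shows $R_{n_i}(x^{a_i}t_i)\iff R_{n_iA/a_i}(x^{A}t_i^{A/a_i})$, and likewise for the negative literals, where all new subscripts and all exponents $A/a_i$ are integers, so $t_i^{A/a_i}$ remains a genuine term of $\mathcal{L}$. Now every occurrence of $x$ is $x^{A}$, and as $x$ ranges over $\mathbb{Q}^+$ the element $x^{A}$ ranges over exactly the $A$-th powers; hence, writing $z=x^{A}$, the formula $\exists x\,\Phi$ is equivalent to $\exists z\,[\,R_A(z)\wedge\bigwedge_i R_{N_i}(z\cdot T_i)\wedge\bigwedge_k \neg R_{M_k}(z\cdot S_k)\,]$ with $N_i=n_iA/a_i$, $T_i=t_i^{A/a_i}$, and analogously $M_k,S_k$. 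This is precisely the shape handled by Lemma~\ref{lem-rm} (the extra conjunct $R_A(z)$ being the positive literal with multiplier $1$ and index $A$, which guarantees at least one positive literal is present), and applying that lemma replaces the quantifier by the quantifier-free condition it supplies.

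For the remaining case, suppose $\Phi$ contains an equality in $x$, which after rearranging and inverting reads $x^{c}=s$ with $c>0$ and $s$ an $x$-free monomial. Since $\mathbb{Q}^+$ is torsion-free (it is free abelian on the primes), such an $x$ exists, and is then unique, exactly when $R_c(s)$ holds; I would use this unique $c$-th root $x^\ast$ to evaluate away every other literal. The identity $R_m(y)\iff R_{mc}(y^{c})$ turns a positive literal $R_m(x^{b}w)$ evaluated at $x^\ast$ into $R_{mc}\!\big((x^\ast)^{bc}w^{c}\big)=R_{mc}(s^{b}w^{c})$, which is $x$-free; negative literals transform the same way with a negation, and a further equality $x^{c'}=w$ becomes $s^{c'}=w^{c}$ upon raising to the $c$-th power, using injectivity of $y\mapsto y^{c}$. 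Thus $\exists x\,\Phi$ is equivalent to the conjunction of $R_c(s)$ with all these $x$-free literals, and the quantifier is gone.

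The step I expect to be the main obstacle is the treatment of powers of $x$ larger than $1$, since Lemmas~\ref{lem-rn} and~\ref{lem-rm} only speak about $x$ to the first power. The tempting per-prime reduction of $R_n(x^{a}t)$ to a first-power relation forces one to divide the exponents of $t$ by $\gcd(a,n)$, i.e.\ to name a root $t^{1/\gcd(a,n)}$ that is not a term of $\mathcal{L}$; clearing that denominator merely reinstates the higher power of $x$. Raising all exponents to a common multiple $A$ and substituting $z=x^{A}$ sidesteps this difficulty because it only ever uses integer powers of existing terms, and the constraint that $z$ be an $A$-th power is itself expressible as $R_A(z)$. Throughout, the correctness of the valuation computations and of the uniqueness-of-roots argument rests on unique factorization in $\mathbb{Q}^+$, exactly as exploited in the proofs of Lemma~\ref{lem-rn} and Theorem~\ref{th-crt}.
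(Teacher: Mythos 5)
Your overall strategy --- normalise terms to Laurent monomials, raise every occurrence of $x$ to a common multiple $A$ of its exponents via $R_n(y)\leftrightarrow R_{nq}(y^q)$, substitute $z=x^A$ at the price of the extra conjunct $R_A(z)$, and then invoke Lemma~\ref{lem-rm} --- is exactly the route the paper takes, and your separate treatment of the case with a positive equality $x^c=s$ (solvability iff $R_c(s)$, then evaluation of the remaining literals at the unique root via $R_m(y)\leftrightarrow R_{mc}(y^c)$) is a correct, if slightly rearranged, version of the paper's substitution step. There is, however, a genuine gap: you never account for \emph{negated} equalities. Your inventory of literal shapes lists $x^c\cdot r=1$, $R_n(x^a t)$ and $\neg R_m(x^b s)$ but omits $x^c\cdot r\neq 1$, and consequently in your main case (``no equality in $x$'') the conjunction you hand to Lemma~\ref{lem-rm} contains only $R$-literals and their negations. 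A formula such as $\exists x\,\big(x\neq t_1\wedge R_{n}(u\cdot x)\wedge\neg R_{m}(v\cdot x)\big)$ is not covered by your argument as written, and Lemma~\ref{lem-rm} by itself says nothing about it.

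The missing ingredient is small but must be stated: when the system of $R$-literals and negated $R$-literals is consistent, it has \emph{infinitely many} solutions (the proof of Lemma~\ref{lem-rm} exhibits one solution $\mathsf{P}^n\cdot x_0$ for each prime $\mathsf{P}$ not occurring in the data), so the finitely many values excluded by the disequations $x\neq t_i$ can always be avoided; hence the disequations contribute nothing to the quantifier-free equivalent. This is precisely the closing observation in the paper's proof of Theorem~\ref{th-q1}. (In your equality case the disequations are harmless, since they transform under $y\mapsto y^c$ exactly like the equalities you do treat, by injectivity of that map on $\mathbb{Q}^+$; but this too should be said explicitly.) With that one addition your proof is complete and coincides in substance with the paper's.
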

\begin{proof}
The folklore technique of quantifier elimination starts
from characterizing the terms and  atomic formulas, also eliminating
 negations, implications and universal quantifiers, and then
removing the disjunctions from the scopes of existential
quantifiers, which leaves the final case to be the existential
quantifier with the conjunction of some atomic (or negated atomic)
formulas (see Theorem 31F of \cite{enderton}).
Removing this one existential quantifier implies the
ability to eliminate all the other quantifiers by induction.

Let us
summarize the first steps:
For a variable $x$ and parameter ${a}$, all $\mathcal{L}-$terms are equal to $x^k{a}^l$ for
some $k,l\in\mathbb{Z}$. Atomic $\mathcal{L}-$formulas are in the form $s=t$ or
 $R_n(u)$ for some terms $s,t,u$ and $n\geqslant 2$. Negated atomic $\mathcal{L}-$formulas
are thus $s\not=t$  and $\neg R_n(u)$.
  So, we will eliminate the quantifier of the formulas of the form
   $\exists x (\bigwedge_i \theta_i)$
  where each $\theta_i$ is in the form $(x^\alpha=s)$ or $(x^\beta\not=t)$
  or $R_n(ux^\gamma)$ or $\neg R_m(vx^\delta)$ for some
   $\alpha,\beta,\gamma,\delta\in\mathbb{N}$ and $\mathcal{L}-$terms $s,t,u,v$.
   Whence, it suffices to show that the $\mathcal{L}-$formula
    \newline\centerline{$\exists x \big[\bigwedge_h(x^{\alpha_h}=s_h)\wedge
     \bigwedge_i(x^{\beta_i}\not=t_i)
  \wedge\bigwedge_j(R_{n_j}(u_j\cdot x^{\gamma_j}))
  \wedge\bigwedge_k(\neg R_{m_k}(v_k\cdot x^{\delta_k}))\big]$}  is
   equivalent to another
  $\mathcal{L}-$formula in which $\exists x$  does not appear.
  This will finish the proof.

   Here comes the next steps of quantifier elimination.  The powers of
$x$ can be unified: let $p$  be the least common multiple of the
$\alpha_h$'s, $\beta_i$'s, $\gamma_j$'s,  and $\delta_k$'s.  From the
$\langle\mathbb{Q}^+,\mathcal{L}\rangle-$equivalences $a\!=\!b
\leftrightarrow a^q\!=\!b^q$
and $R_n(a)\leftrightarrow R_{nq}(a^q)$  we infer that the above
formula can be re-written equivalently as
{$\exists x \big[\bigwedge_h(x^{p}\!=\!s_h)\wedge \bigwedge_i(x^{p}\not=t_i)
\wedge\bigwedge_j(R_{n_j}(u_j\cdot x^{p}))\wedge\bigwedge_k(\neg R_{m_k}(v_k\cdot x^{p}))
\big]$}
for possibly   new  $s_h$'s, $t_i$'s, $u_j$'s, $n_j$'s, $v_k$'s,
 and $m_k$'s. This formula is in turn  equivalent to
 \newline\centerline{$\exists y
 \big[\bigwedge_h(y\!=\!s_h)\wedge \bigwedge_i(y\not=t_i)
\wedge\bigwedge_j(R_{n_j}(u_j\cdot y))\wedge
\bigwedge_k(\neg R_{m_k}(v_k\cdot y))\wedge R_{p}(y)
\big]$}
 (with the substitution $y=x^p$). Thus it suffices to show that
 the following  formula is equivalent to a quantifier--free formula:
{$\exists x \big[\bigwedge_h(x\!=\!s_h)\wedge \bigwedge_i(x\not=t_i)
\wedge\bigwedge_j(R_{n_j}(u_j\cdot x))\wedge\bigwedge_k(\neg R_{m_k}(v_k\cdot x))
\big]$.}
   If the conjunction $\bigwedge_h(x=s_h)$ is not empty ($h\not=0$),
   then the above formula is
 equivalent to the quantifier--free formula (for some term $s_0$):
 \newline\centerline{
 $\bigwedge_h(s_0\!=\!s_h)\wedge \bigwedge_i(s_0\not=t_i)
\wedge\bigwedge_j(R_{n_j}(u_j\cdot s_0))\wedge\bigwedge_k(\neg R_{m_k}(v_k\cdot s_0)).$}
     So, let us assume that the conjunction $\bigwedge_h(x=s_h)$ is empty ($h=0$),
     and thus we are to eliminate the quantifier of the formula
  \newline\centerline{(A) \qquad
$\exists x \big[\bigwedge_i(x\not=t_i)
\wedge\bigwedge_j(R_{n_j}(u_j\cdot x))\wedge\bigwedge_k(\neg R_{m_k}(v_k\cdot x))
\big]$.}
By Lemma~\ref{lem-rm} this formula implies the following quantifier--free formula
\newline\centerline{(B) \qquad\qquad \qquad  $\bigwedge_{i\not=j}R_{(n_i,n_j)}
(u_i\!\cdot\!u_j^{-1})\wedge\bigwedge_{k:m_k\mid n}\neg R_{m_k}(v_k\!\cdot\!x_0)$}
where $n=[n_1,\ldots,n_l]$ and $x_0=\prod_{i=1}^{l}(u_i)^{-c_i\cdot n/n_i}$ in
 which $c_i$'s satisfy $\sum_ic_i\!\cdot\!n/n_i=1$. Also the proof of
 Lemma~\ref{lem-rm} shows that if (B) holds, then there are infinitely
  many $x$'s which satisfy $\bigwedge_j(R_{n_j}(u_j\cdot x))\wedge
  \bigwedge_k(\neg R_{m_k}(v_k\cdot x))$, and so some of those $x$'s also
  satisfy $\bigwedge_i(x\not=t_i)$; whence (A) holds too. Summing up,  (A) is equivalent to the quantifier--free formula (B).
\end{proof}

The above proof can be adapted to show the quantifier--elimination
for the theory of the structures $\langle\mathbb{Q}^{\geqslant
0},0,1,\cdot, ^{-1},R_2,R_3,\ldots\rangle$ and  $\langle\mathbb{Q},0,1,-1,\cdot,
^{-1},R_2,R_3,\ldots,\mathcal{P}\rangle$, where the predicate
$\mathcal{P}$ is defined as $\mathcal{P}(x)\iff x>0$. Since the
techniques of the proofs of these theorems are very similar to  the  proofs in the next section, we do not present them.

\begin{theorem}\label{th-q2}
Theories of the following structures admit quantifier
elimination:
\newline\centerline{$\langle\mathbb{Q}^{\geqslant
0},0,1,\cdot, ^{-1},R_2,R_3,\ldots\rangle$ and
$\langle\mathbb{Q},0,1,-1,\cdot,
^{-1},R_2,R_3,\ldots,\mathcal{P}\rangle$.}
\end{theorem}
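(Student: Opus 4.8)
The plan is to reduce both structures to the positive case of Theorem~\ref{th-q1}, exploiting that every nonzero rational factors uniquely as a sign times a positive magnitude and that $0$ is an absorbing, isolated point. As in Theorem~\ref{th-q1}, the folklore reduction leaves us to eliminate a single $\exists x$ from a conjunction of atomic and negated atomic formulas, and I carry out that elimination in each structure by a finite quantifier-free case analysis.

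For $\langle\mathbb{Q}^{\geqslant 0},0,1,\cdot,{}^{-1},R_2,R_3,\ldots\rangle$ the only new ingredient is the absorbing element (under the usual convention $0^{-1}=0$, so that $0\cdot t=0$ and $R_n(0)$ hold for all $n$). Given $\exists x\,\psi(x)$ with $\psi$ a conjunction of (negated) atoms, I would split off the origin by
\[
\exists x\,\psi(x)\ \Longleftrightarrow\ \psi(0)\ \vee\ \exists x\,\big(x\neq 0\wedge\psi(x)\big).
\]
The first disjunct is obtained by substituting the constant $0$ and simplifying each atom, hence is already quantifier-free. In the second disjunct $x$ ranges over $\mathbb{Q}^+$; to apply Theorem~\ref{th-q1} I further split into finitely many quantifier-free cases according to which of the parameters are $0$. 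In each case the parameters equal to $0$ collapse their atoms, and what remains is an existential quantifier over $\mathbb{Q}^+$ with strictly positive parameters, eliminable by Theorem~\ref{th-q1}. Disjoining the cases yields the quantifier-free equivalent.

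For $\langle\mathbb{Q},0,1,-1,\cdot,{}^{-1},R_2,R_3,\ldots,\mathcal{P}\rangle$ the new ingredient is the sign, recorded by $\mathcal{P}$, and the key observation is that each atom splits into a sign part (expressible with $\mathcal{P}$) and a magnitude part (an atom of the positive structure in the magnitudes $|t|\in\mathbb{Q}^+$). The sign of a product is the product of the signs, so for any term $t$ the truth value of $\mathcal{P}(t)$ is a Boolean combination of the $\mathcal{P}$'s of the variable and the parameters together with the parities of the exponents; the magnitude equation $|s|=|t|$ is captured in the full language by $s=t\vee s=(-1)\cdot t$; and for the power predicates one has that $R_n(t)$ is equivalent to $R_n(|t|)$ when $n$ is odd and to $\big(\mathcal{P}(t)\vee t=0\big)\wedge R_n(|t|)$ when $n$ is even. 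Passing to magnitudes thus exhibits a copy of the positive structure, and I would again strip off $0$ as above and then split the nonzero existential according to the two values of $\mathcal{P}(x)$, substituting $x=(-1)\cdot y$ with $y\in\mathbb{Q}^+$ in the negative branch so that every sign becomes determined. After a final case split on the signs and zeroness of the parameters, each branch is an existential quantifier over $\mathbb{Q}^+$ in the magnitudes, eliminated by Theorem~\ref{th-q1}; the resulting quantifier-free magnitude formula is then pulled back to the full language using the equivalences just listed (with $-1$ and $\mathcal{P}$ available), and the sign conditions collected along the way are already quantifier-free.

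I expect the main obstacle to be bookkeeping rather than a genuinely new idea. One must propagate the sign conditions consistently through the sign/magnitude split, treat the even/odd dichotomy for $R_n$ correctly --- this is the one place where positivity really enters --- and absorb the degenerate atoms created by $0$ and the convention $0^{-1}=0$. Once these are organized into the finite case analysis above, every surviving case is literally an instance already settled by Theorem~\ref{th-q1}, so no further quantifier-elimination argument is required.
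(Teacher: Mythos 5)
Your proposal is correct and follows exactly the route the paper intends: the paper omits the proof of Theorem~\ref{th-q2}, stating only that it adapts Theorem~\ref{th-q1} using the techniques of the real-number section, which are precisely your two steps --- splitting off $0$ as an absorbing element (as in Theorem~\ref{th-r2}) and the sign/magnitude case split via $\mathcal{P}$ and the substitution $x=(-1)\cdot y$ (as in Theorem~\ref{th-r3}). Your explicit treatment of the even/odd dichotomy for $R_n$ under the sign split is a detail the paper leaves implicit, and you handle it correctly.
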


\begin{corollary}\label{cor-q3}
The theories of the structures
$\langle\mathbb{Q}^{+},\cdot\rangle$,
$\langle\mathbb{Q}^{\geqslant 0},\cdot\rangle$, and
$\langle\mathbb{Q},\cdot\rangle$ are decidable.
\end{corollary}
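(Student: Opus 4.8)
The plan is to deduce decidability from the quantifier-elimination results (Theorems~\ref{th-q1} and~\ref{th-q2}) in two moves: first establish decidability of the three \emph{enriched} structures, then pass to their $\{\cdot\}$-reducts. The standard fact I would invoke is that a theory admitting an \emph{effective} quantifier-elimination procedure whose quantifier-free sentences are uniformly decidable is itself decidable. So the first task is to confirm that the elimination procedures supplied by the proofs of Theorems~\ref{th-q1} and~\ref{th-q2} are effective, not merely existential: inspecting those proofs, every reduction step --- unifying the powers of $x$ through a least common multiple, the substitution $y=x^{p}$, and the replacement of the existential formula (A) by the quantifier-free formula (B) via Lemmas~\ref{lem-rn} and~\ref{lem-rm} --- is described by an explicit manipulation of the syntax, so the whole procedure is computable.

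Second, I would verify that the quantifier-free sentences of each language are decidable. Since the only constants are among $1$ (for $\mathbb{Q}^{+}$) and $0,1,-1$ (for $\mathbb{Q}^{\geqslant 0}$ and $\mathbb{Q}$), every closed term evaluates, by repeated multiplication and inversion, to a fixed rational number that is computable. A quantifier-free sentence is then a Boolean combination of closed atomic formulas of the forms $s=t$, $R_{n}(u)$, and $\mathcal{P}(u)$, and each of these can be decided: equality of rationals and positivity of a rational are trivially decidable, while the question whether a given rational is an $n$th power of another rational is decidable as already noted in the text. Combining the effectiveness of the elimination with the decidability of quantifier-free sentences yields decidability of the theories of all three enriched structures $\langle\mathbb{Q}^{+},1,\cdot,{}^{-1},R_{2},\ldots\rangle$, $\langle\mathbb{Q}^{\geqslant 0},0,1,\cdot,{}^{-1},R_{2},\ldots\rangle$, and $\langle\mathbb{Q},0,1,-1,\cdot,{}^{-1},R_{2},\ldots,\mathcal{P}\rangle$.

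Finally, I would descend to the reducts. Because $\{\cdot\}$ is a sublanguage of each enriched language, and the enriched structures are expansions of $\langle\mathbb{Q}^{+},\cdot\rangle$, $\langle\mathbb{Q}^{\geqslant 0},\cdot\rangle$, and $\langle\mathbb{Q},\cdot\rangle$ on the \emph{same} underlying domains, any $\{\cdot\}$-sentence $\varphi$ satisfies $\langle\mathbb{Q}^{+},\cdot\rangle\models\varphi$ if and only if $\langle\mathbb{Q}^{+},1,\cdot,{}^{-1},R_{2},\ldots\rangle\models\varphi$, and likewise in the other two cases, since $\varphi$ mentions only the common symbol $\cdot$. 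Hence deciding $\varphi$ in a reduct reduces to deciding it in the corresponding enriched structure, which the previous step accomplishes; this gives decidability of $\langle\mathbb{Q}^{+},\cdot\rangle$, $\langle\mathbb{Q}^{\geqslant 0},\cdot\rangle$, and $\langle\mathbb{Q},\cdot\rangle$.

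I do not expect a serious obstacle, since the corollary is essentially bookkeeping layered on top of the elimination theorems. The one point requiring genuine care is confirming the \textbf{effectiveness} of the elimination --- as opposed to its model-theoretic existence --- because decidability, unlike quantifier elimination as a structural property, hinges on being able to carry the reduction out algorithmically and then to evaluate the resulting variable-free formula.
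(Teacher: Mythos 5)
Your proposal is correct and matches the paper's own (implicit) argument: the paper states the corollary as an immediate consequence of Theorems~\ref{th-q1} and~\ref{th-q2}, having already sketched exactly this reasoning just before Theorem~\ref{th-q1} --- namely that the quantifier-free formulas of $\mathcal{L}$ are decidable (including the $R_n$ predicates on explicit rationals), so effective quantifier elimination yields decidability of the enriched structures and hence of their $\{\cdot\}$-reducts. Your added emphasis on checking that the elimination procedure is effective, not merely existential, is the right point of care, and the paper's proofs are indeed explicitly syntactic as you observe.
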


\subsection{Real Numbers}
The multiplicative theory of the positive  real numbers,
$\langle\mathbb{R}^+,\cdot\rangle$, can be axiomatized as
Non--Trivial Torsion--Free Divisible Abelian Groups:
\\
{$\bullet\,\forall x,y,z\,\big(x\!\cdot\!(y\!\cdot\!z)\!=\!(x\!\cdot\!y)\!\cdot\!z\big)$
 \ \ \ }
{$\bullet\,\forall x\,\big(x\!\cdot\!1\!=\!x\!=\!1\!\cdot\!x\big)$\ \ \ }
{$\bullet\,\forall x\,\big(x\!\cdot\!x^{-1}\!=\!1\!=\!x^{-1}\!\cdot\!x\big)$\ \ \ }
{$\bullet\,\forall x,y\,\big(x\!\cdot\!y=y\!\cdot\!x\big)$}\\
{$\bullet\,\forall x\exists y \big(y^n\!=\!x\big)$, \ $n=2,3,\cdots$}
{$\bullet\,\forall x\,\big(x^n\!=\!1\rightarrow x\!=\!1\big)$, \ $n=2,3,\cdots$\qquad\qquad\ \; }
{$\bullet\,\exists x\,\big(x\neq 1\big)$}
\begin{theorem}\label{th-r1}
The theory of the structure
$\langle\mathbb{R}^+,1,\cdot,^{-1}\rangle$  admits quantifier
elimination.
\end{theorem}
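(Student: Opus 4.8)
The plan is to run exactly the quantifier--elimination template already used in the proof of Theorem~\ref{th-q1}: by the reduction of Theorem~31F of \cite{enderton} it suffices to eliminate a single existential quantifier from a conjunction of atomic and negated atomic formulas. The decisive structural facts, which I would isolate first, are that $\langle\mathbb{R}^+,\cdot\rangle$ is a non--trivial torsion--free divisible abelian group (it satisfies the axioms displayed just above the theorem, being isomorphic to $\langle\mathbb{R},0,+,-\rangle$ via $\log$), hence it is \emph{uniquely} divisible: every equation $x^n=a$ has exactly one solution in $\mathbb{R}^+$. In particular every element is an $n$th power, so the predicates $R_n$ that complicated the $\mathbb{Q}^+$ case become trivially true and are simply absent here. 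Next I would record the syntax: in the language $\{1,\cdot,^{-1}\}$ every term in a variable $x$ with parameters equals $x^k\!\cdot\!a$ for some $k\in\mathbb{Z}$ and parameter term $a$, every atomic formula reduces to $x^k=s$, and every negated atomic formula to $x^k\neq t$. Pushing negative powers across equalities ($x^{-k}=s\leftrightarrow x^k=s^{-1}$) and pulling out the parameter--free literals arising from $k=0$ (where $x^0=1$), I may assume all exponents are $\geqslant 1$.

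The task then reduces to eliminating the quantifier from
\[
\exists x\,\Big[\,\bigwedge_{i=1}^{p}(x^{k_i}=s_i)\wedge\bigwedge_{j=1}^{q}(x^{m_j}\neq t_j)\,\Big],\qquad k_i,m_j\geqslant 1,
\]
and I would split into two cases. If $p\geqslant 1$, fix the equation $x^{k_1}=s_1$; by unique divisibility this pins $x$ down to the single $k_1$th root of $s_1$, so the existential is equivalent to the quantifier--free \emph{compatibility conditions} obtained by raising to suitable powers. Using the torsion--free equivalence $a=b\leftrightarrow a^n=b^n$, from $x^{k_1}=s_1$ and $x^{k_i}=s_i$ one gets $s_1^{k_i}=s_i^{k_1}$ (and conversely this forces the unique root to satisfy $x^{k_i}=s_i$), while each inequation becomes $s_1^{m_j}\neq t_j^{k_1}$. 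Since $x^{k_1}=s_1$ is solvable by divisibility, existence is automatic, and the formula is equivalent to the quantifier--free formula
\[
\bigwedge_{i=2}^{p}\big(s_1^{k_i}=s_i^{k_1}\big)\wedge\bigwedge_{j=1}^{q}\big(s_1^{m_j}\neq t_j^{k_1}\big).
\]

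If instead $p=0$, only inequations remain; each literal $x^{m_j}\neq t_j$ forbids exactly one value of $x$, namely the unique $m_j$th root of $t_j$, so the conjunction forbids only finitely many elements. As $\mathbb{R}^+$ is infinite a witness always exists, and the formula is equivalent to $1=1$ (the fully degenerate subcase $p=q=0$ being $\exists x\,(1\!=\!1)$, true by non--triviality). Combining the two cases completes the single--quantifier step, and by induction all quantifiers are eliminable. Honestly there is no deep obstacle here: once unique divisibility and infiniteness are in hand the whole argument closes, and the only points demanding care are the bookkeeping of the term normalization (absorbing negative and zero exponents, unifying parameter terms) and the routine verification of the torsion--free "raising to a power" equivalences used to convert $x^{m_j}\neq t_j$ into a relation among parameters alone. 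It is worth noting that, via the isomorphism with $\langle\mathbb{R},0,+,-\rangle$, this theorem is just the multiplicative transcription of the classical quantifier elimination for non--trivial torsion--free divisible abelian groups.
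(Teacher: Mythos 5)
Your proof is correct and follows essentially the same route as the paper's: reduce to a single existential over a conjunction of literals $x^{n}=t$ and $x^{m}\neq s$, then use torsion--freeness ($a=b\leftrightarrow a^{k}=b^{k}$) and divisibility to discharge the quantifier, with the no--equation case handled by infiniteness of $\mathbb{R}^{+}$. The only (immaterial) difference is bookkeeping: the paper unifies all exponents to their least common multiple and substitutes $y=x^{p}$, whereas you anchor on one equation and cross--raise to powers to get pairwise compatibility conditions.
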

\begin{proof}
Every atomic formula containing the variable $x$ can be written as
$x^n=t$ for some $n\in\mathbb{Z}$ and some term $t$. So, it suffices
to show that the formula
\newline\centerline{$\exists x\,\big(\bigwedge_ix^{n_i}=t_i\wedge\bigwedge_jx^{m_j}\not=s_j
\big)$}
is equivalent to a quantifier-free formula. Take $p$ be the lease
common multiple of $n_i$'s and $m_j$'s; then by
$a=b\longleftrightarrow a^k=b^k$, the above formula will be
equivalent to
\newline\centerline{$\exists x\,
\big(\bigwedge_ix^{p}=t_i\wedge\bigwedge_jx^{p}\not=s_j\big)$}
and by the divisibility of
$\langle\mathbb{R}^+,1,\cdot,^{-1}\rangle$ (the axiom $\forall
x\exists y \big(y^n=x\big)$) we can write this as
\newline\centerline{$\exists x\,\big(\bigwedge_ix=t_i\wedge\bigwedge_jx\not=s_j\big)$.}
If the conjunct $\bigwedge_ix=t_i$ is not empty, then the above
formula is equivalent to the following quantifier--free formula (for
some term $t_0$):
\newline\centerline{$\bigwedge_it_0=t_i\wedge\bigwedge_jt_0\not=s_j$,}
and if the conjunct $\bigwedge_ix=t_i$ is empty, then the above
formula (being $\exists x\,\big(\bigwedge_jx\not=s_j\big)$ which is
true) is equivalent to $0=0$ which is a quantifier-free formula.
\end{proof}
\begin{theorem}\label{th-r2}
The theory of the structure  $\langle\mathbb{R}^{\geqslant
0},0,1,\cdot,^{-1}\rangle$ admits quantifier elimination.
\end{theorem}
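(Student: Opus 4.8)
The plan is to reduce to Theorem~\ref{th-r1} by treating the extra point $0$ separately. We work under the convention $0^{-1}=0$, for which the map $y\mapsto y^p$ is a bijection of $\mathbb{R}^{\geqslant 0}$ for every $p\geqslant 1$; hence both $a\!=\!b\leftrightarrow a^p\!=\!b^p$ and the divisibility statement $\forall y\exists x(x^p\!=\!y)$ remain valid on all of $\mathbb{R}^{\geqslant 0}$. Because $0\!\cdot\!t=0$ and $0^{-1}=0$, every term equals either the constant $0$ or a monomial $x^k\prod_i a_i^{l_i}$ (with $k,l_i\in\mathbb{Z}$) in the variable $x$ and the parameters $\bar a=a_1,\ldots,a_r$; and since $0$ is a constant of the language, the formula $x\!=\!0$ is already quantifier--free.

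First I would perform the case split
\[
\exists x\,\varphi(x,\bar a)\ \longleftrightarrow\ \varphi(0,\bar a)\ \vee\ \exists x\big(x\!\neq\!0\wedge\varphi(x,\bar a)\big).
\]
The first disjunct $\varphi(0,\bar a)$ is quantifier--free, being the result of substituting the constant $0$ for $x$. For the second disjunct the variable $x$ now ranges over $\mathbb{R}^+$, and I would split further on the zero--pattern of the parameters, i.e. over the finitely many quantifier--free cases $\bigwedge_{i\in S}a_i\!=\!0\wedge\bigwedge_{i\notin S}a_i\!\neq\!0$ for $S\subseteq\{1,\ldots,r\}$. Once $x\!\neq\!0$ and the zero--status of every parameter have been fixed, each monomial is either identically $0$ (when it involves some $a_i$ with $i\in S$ and nonzero exponent) or a genuine element of the group $\mathbb{R}^+$.

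Inside one such case every atomic and negated atomic formula simplifies to a fixed truth value or an honest group relation: a vanishing monomial turns an equality ``$M\!=\!0$'' into \emph{true} and an equality ``$M_1\!=\!M_2$'' with exactly one vanishing side into \emph{false} (and the \emph{negated} versions dually), so all surviving (in)equalities take the form $x^{n}\!=\!u$ or $x^{m}\!\neq\!u$ with $u$ a product of nonzero parameters, hence $u\in\mathbb{R}^+$. This is exactly the situation of Theorem~\ref{th-r1}: the group manipulations bringing each atomic formula to the shape $x^n\!=\!u$ are now legitimate, the powers of $x$ can be unified to a common $p$ via $a\!=\!b\leftrightarrow a^p\!=\!b^p$, divisibility lets one replace $x^p$ by a fresh variable, and the result is resolved as there (a nonempty equality block pins $x$ to a single parameter value, while an empty one leaves $\exists x(x\!\neq\!0\wedge\bigwedge_j x\!\neq\!s_j)$, true since $\mathbb{R}^+$ is infinite). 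Note that as soon as one honest equality is present it already forces $x\!\neq\!0$, so the guard $x\!\neq\!0$ is absorbed for free. Applying the elimination of Theorem~\ref{th-r1} in each case and re--assembling the finitely many quantifier--free outputs under their quantifier--free zero--pattern guards yields a quantifier--free equivalent of $\exists x\,\varphi$.

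I expect the only genuine difficulty to be bookkeeping rather than mathematics: one must verify that under both layers of case analysis every atomic and \emph{negated} atomic formula collapses correctly, and in particular that once a monomial is known to vanish the constraint it imposes falls on the parameters alone and never secretly on $x$. With the vanishing conditions thus isolated as parameter--only side conditions, the existential quantifier only ever meets the purely group--theoretic part of the formula, so Theorem~\ref{th-r1} does all the work and the elimination goes through.
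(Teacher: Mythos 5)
Your proof is correct and follows essentially the same route as the paper, which disposes of this theorem in one line ("This can be proved just like Theorem~\ref{th-r1}, noting that $\forall x\,(x\!\cdot\!0\!=\!0)$"); your case split on $x=0$ versus $x\neq 0$ and on the zero--pattern of the parameters is exactly the bookkeeping that remark leaves implicit. Nothing further is needed.
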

\begin{proof} This can be proved just like  Theorem~\ref{th-r1},
noting that $\forall x\, (x\!\cdot\!0\!=\!0)$.
\end{proof}

Let $\mathcal{P}$ be the predicate of positiveness:
$\mathcal{P}(x)\equiv x\!>\!0$. Below we will show that the theory
of the structure
$\langle\mathbb{R},0,1,-1,\cdot,^{-1},\mathcal{P}\rangle$ admits
quantifier elimination. By convention $x^{-1}=1/x$ if $x\not=0$ and
$x^{-1}=0$ if $x=0$. The notation $-x$ is a shorthand for
$(-1)\!\cdot\!x$.
\begin{theorem}\label{th-r3}
The theory of the structure
$\langle\mathbb{R},0,1,-1,\cdot,^{-1},\mathcal{P}\rangle$ admits
quantifier elimination.
\end{theorem}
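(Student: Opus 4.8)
The plan is to follow the same quantifier-elimination skeleton used in Theorems~\ref{th-r1} and~\ref{th-r2}, but now carefully track signs through the predicate $\mathcal{P}$. First I would reduce the language of atomic formulas. Over $\langle\mathbb{R},0,1,-1,\cdot,{}^{-1},\mathcal{P}\rangle$ every term in the variable $x$ (with parameters) simplifies to a monomial $c\!\cdot\!x^n$ for $c$ a parameter-term and $n\in\mathbb{Z}$ (using the convention $x^{-1}=1/x$, $0^{-1}=0$). Hence each atomic formula is equivalent to one of the shapes $x^n=t$, $\mathcal{P}(u\!\cdot\!x^n)$, or the degenerate equation $x=0$; negated atomics are $x^n\neq t$ and $\neg\mathcal{P}(u\!\cdot\!x^n)$. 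As in Theorem~31F of \cite{enderton}, it suffices to eliminate $\exists x$ from a single conjunction of such literals.

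Next I would isolate the zero case. The literal $x=0$ (or $x\neq 0$) should be split off at the front: if $x=0$ appears among the conjuncts, substitute $x:=0$ everywhere and obtain a quantifier-free formula directly (noting $\mathcal{P}(0)$ is false, $0^{-1}=0$, etc.), so I may henceforth assume $x\neq 0$, i.e.\ work in $\mathbb{R}\setminus\{0\}\cong\mathbb{R}^{+}\times\{\pm1\}$. The crucial structural fact is that the multiplicative group of nonzero reals splits as a sign part times a positive part: writing $x=\varepsilon\cdot|x|$ with $\varepsilon\in\{\pm1\}$, the predicate $\mathcal{P}(u\!\cdot\!x^n)$ depends only on the sign of $u$, the sign $\varepsilon$, the parity of $n$, and the (always-true) positivity of $|x|$. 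So I would do a finite case-split on the sign of $x$: replace $\exists x\,[\Phi(x)]$ by $\exists x\,[\mathcal{P}(x)\wedge\Phi(x)]\ \vee\ \exists x\,[\mathcal{P}(-x)\wedge\Phi(x)]$, and in each disjunct rewrite every $\mathcal{P}(u\!\cdot\!x^n)$ into a sign-condition on the parameter $u$ (a quantifier-free statement, since $\mathcal{P}(u)\vee\mathcal{P}(-u)\vee u=0$ is decidable for the parameter) conjoined with a positive-variable formula in $|x|$.

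After this reduction each disjunct becomes an existential statement over the \emph{positive} reals involving only equations $|x|^n=t'$, disequations $|x|^n\neq s'$, and sign-constraints on parameters that no longer mention $x$. The positive-variable part is then handled verbatim by the quantifier-elimination argument of Theorem~\ref{th-r1}: unify the exponents to a common $p$ via $a=b\leftrightarrow a^p=b^p$, use divisibility of $\langle\mathbb{R}^{+},1,\cdot,{}^{-1}\rangle$ to strip the power and reduce to $\exists x\,(\bigwedge x=t_i\wedge\bigwedge x\neq s_j)$, and finish exactly as before (substitute a witness $t_0$ if an equation is present, else the existential over disequations is simply true). Collecting the finitely many disjuncts and the separated sign-conditions yields a quantifier-free equivalent.

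The main obstacle I anticipate is bookkeeping rather than conceptual depth: I must verify that the sign of $u\!\cdot\!x^{n}$ really is a quantifier-free function of the data once the sign of $x$ is fixed, being careful with even versus odd $n$ (even powers kill the sign of $x$, odd powers preserve it) and with the edge conventions $x^{-1}=1/x$ and $0^{-1}=0$ so that no spurious solutions at $x=0$ are introduced or lost. Once the parity analysis is pinned down, the positivity predicate contributes nothing beyond a finite Boolean combination of parameter-sign tests, and the residual positive-reals problem is already solved; so the delicate point is simply ensuring the sign-splitting is complete and that the two cases $\mathcal{P}(x)$ and $\mathcal{P}(-x)$ together with $x=0$ exhaust $\mathbb{R}$.
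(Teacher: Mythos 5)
Your proposal follows essentially the same route as the paper's proof: normalize the atomic formulas, eliminate $\neg\mathcal{P}$ via $\neg\mathcal{P}(z)\leftrightarrow z=0\vee\mathcal{P}(-z)$, case-split on the sign of $x$ (with $x=0$ handled separately by substitution), use the parity of exponents to strip $x$ out of the $\mathcal{P}$-literals, and hand the residue to the quantifier elimination of Theorem~\ref{th-r1} over $\mathbb{R}^{+}$. The one step you leave implicit is that the sign case-split must also be performed on every \emph{other} variable occurring in the formula (the paper's disjunction over all sign assignments $\pi\in 3^{\ell}$, followed by replacing $y_i$ with $-y_i$ and discarding literals in which $-1$ survives), because without it the equational literals $x^{n}=t$ cannot be read as equations of $\langle\mathbb{R}^{+},1,\cdot,{}^{-1}\rangle$ when $t$ contains parameters of unknown sign; your ``sign-constraints on parameters'' gestures at this, but it is needed for the equations and disequations, not only for the $\mathcal{P}$-literals.
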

\begin{proof}
All the atomic formulas of the language
$\{0,1,-1,\cdot,^{-1},\mathcal{P}\}$  containing the variable $x$
are in the form $x^n=t$ or $x^m\not=s$ or $\mathcal{P}(x^nt)$ or
$\neg\mathcal{P}(x^ms)$ for some $n,m\in\mathbb{Z}$ and terms $t,s$.
We note that negation can be eliminated from $\mathcal{P}$ by
$\neg\mathcal{P}(x)\longleftrightarrow x\!=\!0\vee\mathcal{P}(-x)$.
Also by
$\mathcal{P}(a\!\cdot\!b)\longleftrightarrow[\mathcal{P}(a)\wedge\mathcal{P}(b)]
\vee[\mathcal{P}(-a)\wedge\mathcal{P}(-b)]$ and $\mathcal{P}(x^2)$
we can write $\mathcal{P}(x^nu)$ as $\mathcal{P}(u)$ if $n$ is even,
and as $[\mathcal{P}(x)\wedge\mathcal{P}(u)]
\vee[\mathcal{P}(-x)\wedge\mathcal{P}(-u)]$ if $n$ is odd. Thus for
eliminating the quantifier of $\exists
x\,\big(\bigwedge_ix^{n_i}=t_i\wedge\bigwedge_jx^{m_j}\not=s_j\wedge
\bigwedge_k\mathcal{P}(x^{\alpha_k}\!\cdot\!u_k)
\wedge\bigwedge_l\neg\mathcal{P}(x^{\beta_l}\!\cdot\!v_l)\big)$ we
need to consider the formulas of the form
{$\exists x\,\big(\bigwedge_ix^{n_i}=t_i\wedge
\bigwedge_jx^{m_j}\not=s_j\wedge\mathcal{P}(x)\big)$ or}
{$\exists x\,\big(\bigwedge_ix^{n_i}=t_i\wedge
\bigwedge_jx^{m_j}\not=s_j\wedge\mathcal{P}(-x)\big)$}
only. Note that the second formula is equivalent to the following formula (by $y=-x$):
\newline\centerline{$\exists y\,\big(\bigwedge_iy^{n_i}=(-1)^{n_i}\!\cdot\!t_i\wedge
\bigwedge_jy^{m_j}\not=(-1)^{m_j}s_j\wedge\mathcal{P}(y)\big)$.}
So, we consider the following formulas for eliminating their quantifiers:
\newline\centerline{(A) \qquad $\exists x\,\big(\mathcal{P}(x)\wedge
\bigwedge_ix^{n_i}=t_i\wedge \bigwedge_jx^{m_j}\not=s_j\big)$.} Note
that when $\mathcal{P}(x)$ does not appear,  then we can again write
every formula $\exists x\,\varphi(x)$ as
\newline\centerline{$\exists x\,(\mathcal{P}(x)\wedge\varphi(x))\vee
\varphi(0)\vee\exists x\,(\mathcal{P}(x)\wedge\varphi(-x))$.}
For eliminating the quantifier of the formula (A)  we distinguish
the signs of the other variables appearing in it by the above trick.
So, if $y_1,\ldots,y_\ell$ are all the variables (other than $x$)
appearing in (A), then (A) is equivalent to the following formula
 \newline\centerline{(A') \qquad $\bigvee_{\pi\in 3^\ell}\exists x\,
 \big(\mathcal{P}(x)\wedge\bigwedge_{k=1}^{\ell}\mathcal{P}^{\pi(y_k)}(y_k)\wedge
 \bigwedge_ix^{n_i}=t_i\wedge
\bigwedge_jx^{m_j}\not=s_j\big)$} where $3^\ell=\{\pi\mid
\pi:\{y_1,\ldots,y_\ell\}\rightarrow\{-1,0,1\}\}$   and
$\mathcal{P}^{\pi(y)}(y)$ is defined as
$\mathcal{P}^{\pi(y)}(y)=\mathcal{P}(y)$ if $\pi(y)=1$,
$\mathcal{P}^{\pi(y)}(y)=``y=0"$ if $\pi(y)=0$, and
$\mathcal{P}^{\pi(y)}(y)=\mathcal{P}(-y)$ if $\pi(y)=-1$. By
changing $y_i$ to $-y_i$ when necessary, then we need to eliminate
the quantifier of the formula
\newline\centerline{(B) \qquad $\exists x\,\big(\mathcal{P}(x)\wedge
\bigwedge_{k=1}^\ell\mathcal{P}(y_k)\wedge\bigwedge_ix^{n_i}=t_i(y_1,\ldots,y_\ell)\wedge
\bigwedge_jx^{m_j}\not=s_j(y_1,\ldots,y_\ell)\big)$} where $x$ and
$y_k$'s are all the appearing variables. By the equivalences
$\mathcal{P}(z)\!\wedge\!\mathcal{P}(-z)\leftrightarrow 0=1$ and
$\mathcal{P}(u)\!\wedge\!\mathcal{P}(v)\rightarrow u\not=-v$, we can
assume that no fomula of the form $x^{n_i}=-y_1^{\alpha_1}\cdots
y_\ell^{\alpha_\ell}$ (which is false) or of the form
$x^{m_j}\not=-y_1^{\alpha_1}\cdots y_\ell^{\alpha_\ell}$ (which is
true) appears in (B). Or in the other words, no $-1$ should appear
in (B).  Finally, Theorem~\ref{th-r1} implies that the formula (B)
is equivalent to a quantifier--free formula, since $x$ and all the
$y_k$'s are supposed to be in $\mathbb{R}^+$ by the assumption
$\mathcal{P}(x)\wedge\bigwedge_{k=1}^\ell\mathcal{P}(y_k)$.
\end{proof}

Let us note that we did not use ordering    of reals in the above
proof, and in fact $<$ cannot be defined by multiplication in
$\langle\mathbb{R},\cdot\rangle$.
\subsection{Complex Numbers}
We will give a direct  proof for the decidability
of the theory of the structure $\langle\mathbb{C},\cdot\rangle$
(without referring to Tarski's theorem about the decidability of the
theory of algebraically closed fields) by showing the quantifier
elimination of a suitable extension of the language $\{\cdot\}$. For
any $n\geqslant 3$, let $\omega_n=\cos(2\pi/n)+i\sin(2\pi/n)$. The
complex number $\omega_n$ is an $n-$th root of the unit, and indeed
all the $n-$th roots of the unit are
$\omega_n,(\omega_n)^2,\cdots,(\omega_n)^n=1$. That is  to say that
$\{z\in\mathbb{C}\mid z^n=1\}=\{(\omega_n)^i\mid 1\leqslant
i\leqslant n\}$. Let us recall that by convention $0^{-1}=0$.
\begin{theorem}\label{th-c1}
The theory of
$\langle\mathbb{C},0,1,-1,\cdot,^{-1},\omega_3,\omega_4,\ldots\rangle$
admits quantifier elimination.
\end{theorem}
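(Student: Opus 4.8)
The plan is to run the same outline as in the rational and real cases, the one new feature being the torsion of $\mathbb{C}^{*}=\mathbb{C}\setminus\{0\}$, namely the roots of unity, which is exactly what the constants $\omega_{n}$ are present to name. First I would dispose of the absorbing element $0$ by the equivalence $\exists x\,\varphi(x)\leftrightarrow\varphi(0)\vee\exists x\,[x\neq 0\wedge\varphi(x)]$, where $\varphi(0)$ is quantifier--free once $0^{-1}=0$ and $0\cdot t=0$ are applied; splitting each parameter in the same way on being $0$ or not, I may assume all parameters are nonzero and that $x$ ranges over the divisible abelian group $\langle\mathbb{C}^{*},1,\cdot,^{-1}\rangle$. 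On $\mathbb{C}^{*}$ every term in $x$ equals $\zeta\cdot x^{n}\cdot t$ with $\zeta$ a root of unity (a product of the constants $-1,\omega_{3},\omega_{4},\dots$), $n\in\mathbb{Z}$, and $t$ a parameter term, so every atomic formula containing $x$ can be put in the form $x^{n}=t$. As in the previous sections it then suffices to eliminate the quantifier from
$$\exists x\,\Big[\bigwedge_i x^{n_i}=t_i\wedge\bigwedge_j x^{m_j}\neq s_j\Big],\qquad x\in\mathbb{C}^{*},\ t_i,s_j\neq 0.$$

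Next I would collapse the equalities. The implication $a^{k}=b^{k}\Rightarrow a=b$, which was used in Theorem~\ref{th-r1}, \emph{fails} in $\mathbb{C}^{*}$, so I cannot unify exponents that way; instead I would use B\'ezout. Writing $N=(n_1,\dots)$ for the greatest common divisor of the exponents occurring in equalities and choosing integers $\lambda_i$ with $\sum_i\lambda_i n_i=N$, the product $T=\prod_i t_i^{\lambda_i}$ is forced to equal $x^{N}$; hence $\bigwedge_i x^{n_i}=t_i$ is equivalent to the single equation $x^{N}=T$ together with the quantifier--free compatibility conditions $\bigwedge_i T^{n_i/N}=t_i$. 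When no equality is present, the residual formula $\exists x\,[x\neq0\wedge\bigwedge_j x^{m_j}\neq s_j]$ is simply true, since each $x^{m_j}=s_j$ has only finitely many solutions in the infinite group $\mathbb{C}^{*}$, so that case is equivalent to $0=0$.

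There remains the crucial case of one equality together with the disequalities, $\exists x\,[x^{N}=T\wedge\bigwedge_j x^{m_j}\neq s_j]$. Here $x^{N}=T$ has exactly $N$ solutions, the coset $\{\,\omega_N^{\,k}x_0:0\le k<N\,\}$ of the group $\mu_N$ of $N$-th roots of unity, and this is where the constant $\omega_N$ is indispensable (with $\omega_2=-1$ and the cases $N=1,2$ trivial), since it lets me enumerate the solutions internally. For a solution $x=\omega_N^{\,k}x_0$ one has $x^{m_j}=\omega_N^{\,k m_j}x_0^{m_j}$, and as $k$ runs over $\mathbb{Z}/N$ the factor $\omega_N^{\,k m_j}$ runs over all of $\mu_{N/g_j}$ with $g_j=(N,m_j)$; thus the set $B_j$ of solutions killed by $x^{m_j}\neq s_j$ is either empty or a coset of the order--$g_j$ subgroup of the solution set, and it is nonempty exactly when the quantifier--free relation $s_j^{\,N/g_j}=T^{\,m_j/g_j}$ holds. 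The whole formula is therefore equivalent to the assertion that the $B_j$ do not cover all $N$ solutions.

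The main obstacle is converting this last covering condition into a quantifier--free formula. The $B_j$ are cosets of subgroups of the finite cyclic group $\mu_N$, so by inclusion--exclusion $\big|\bigcup_j B_j\big|$ is a fixed integer combination of the cardinalities $\big|\bigcap_{j\in J}B_j\big|$, and each such intersection is either empty or a coset of a subgroup of explicitly computable order; what must be verified is that its \emph{emptiness} is governed, for every subset $J$, by a quantifier--free multiplicative relation among the $\{s_j\}_{j\in J}$ and $T$ in which $x_0$ no longer appears (it enters only through $x_0^{N}=T$, and the torsion obstruction is captured by the named root of unity $\omega_N$, in the spirit of the Chinese Remainder computation of Theorem~\ref{th-crt}). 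Granting this, $\big|\bigcup_j B_j\big|<N$ becomes a Boolean combination of quantifier--free conditions, and the displayed formula is equivalent to ``not all solutions are excluded''. Carrying out the same reduction in the sign--free one-sided cases, exactly as in Theorem~\ref{th-r3}, completes the elimination; I expect the bookkeeping of these compatibility relations, rather than any conceptual difficulty, to be the real work.
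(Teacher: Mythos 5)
Your reduction to $\mathbb{C}^{*}$, the B\'ezout collapse of the equalities to a single $x^{N}=T$ plus compatibility conditions, and the disposal of the equality-free case are all correct, and your route through the coset-covering condition is genuinely different from the paper's. But there is one real gap, and it is exactly the step you wave at with ``Granting this'': you never show that the emptiness of $\bigcap_{j\in J}B_j$ for $|J|\geq 2$ is a quantifier-free condition. For a single $j$ you verify it ($s_j^{N/g_j}=T^{m_j/g_j}$), but for larger $J$ the condition is the solvability of a simultaneous congruence system $k m_j\equiv d_j\pmod{N}$ whose right-hand sides $d_j$ are defined in terms of the non-definable base point $x_0$, and gesturing at Theorem~\ref{th-crt} does not by itself remove $x_0$. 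The gap is fillable, and by your own machinery: $\bigcap_{j\in J}B_j\neq\emptyset$ is literally the statement $\exists x\,[\,x^{N}=T\wedge\bigwedge_{j\in J}x^{m_j}=s_j\,]$, a system of pure equalities, so your B\'ezout collapse turns it into $\exists x\,[\,x^{d_J}=U_J\,]$ (always true in $\mathbb{C}^{*}$) conjoined with quantifier-free compatibility equations $\chi_J$ among monomials in $T$ and the $s_j$; then $|\bigcap_{j\in J}B_j|$ equals $\gcd(N,\{m_j\}_{j\in J})$ if $\chi_J$ holds and $0$ otherwise, and ``$|\bigcup_jB_j|<N$'' becomes an explicit Boolean combination of the $\chi_J$. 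Since this is the entire content of the theorem, it must be carried out, not granted.

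For comparison, the paper sidesteps counting altogether. Given $x^{n}=t$, it rewrites each disequality via $x^{k}\neq b\leftrightarrow x^{kn}\neq b^{n}\vee\bigvee_{0<i<n}x^{k}=b\,(\omega_n)^{i}$; the first disjunct becomes the quantifier-free condition $t^{m_j}\neq s_j^{n}$ (since $x^{nm_j}=t^{m_j}$), and the remaining disjuncts are new equalities. After distributing, only systems of pure equalities survive, and these collapse by the gcd procedure to a single always-solvable equation plus compatibility conditions. That route trades your inclusion--exclusion bookkeeping for an exponential disjunction over choices of roots of unity, but every step is a literal formula manipulation with no cardinality argument. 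Either way the decisive fact is the same one: a pure system $\exists x\,\bigwedge_i x^{k_i}=u_i$ is equivalent to a quantifier-free formula because $\mathbb{C}^{*}$ is divisible and its torsion is named by the constants $\omega_n$.
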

\begin{proof}
Just like the proof of Theorem~\ref{th-r1} it suffices to show that
the formula
\newline\centerline{(A)\qquad $\exists x\,\big(
\bigwedge_ix^{n_i}=t_i\wedge\bigwedge_jx^{m_j}\not=s_j\big)$,}
where $t_i$'s and $s_j$'s are multiplications of some variables or
the constants $0,1,-1,\omega_3,\omega_4,\ldots$ or their inverses,
is equivalent to a quantifier-free formula. We can assume that all
$n_i$'s are equal, since if for example $n_i<n_j$ then the formula
{$\exists x\,(x^{n_i}=t_i\wedge x^{n_j}=t_j\wedge\theta)$} is
equivalent to {$\exists x\,(x^{n_i}=t_i\wedge
x^{n_j-n_i}=t_j\!\cdot\!t_i^{-1}\wedge\theta)$.} Continuing this
procedure which reduces the powers of bigger exponents, we will
reach to a formula like $\exists
x\,\big(\bigwedge_ix^{n}=t_i\wedge\bigwedge_jx^{m_j}\not=s_j\big)$
which is equivalent to $\bigwedge_i t_i=t_0\wedge \exists
x\,\big(x^n=t_0\wedge\bigwedge_jx^{m_j}\not=s_j\big)$. Whence we are
to eliminate the quantifier of the formula
\qquad\qquad\qquad  {(B)\qquad $\exists
x\,\big(x^n=t\wedge\bigwedge_{j=1}^{j=l}x^{m_j}\not=s_j\big)$.}
\\
By
the equivalence
$y^k=a^k\longleftrightarrow\bigvee_{i<n}y=b(\omega_n)^i$ which holds
in $\mathbb{C}$ we have the following equivalence in $\mathbb{C}$:
$x^k\neq b \longleftrightarrow x^{kn}\neq b^n\vee
\bigvee_{0<i<n}x^k=b(\omega_n)^i$. For every $j=1,\ldots,l$ and
$k=1,\ldots,n$ let the formula $\alpha_{j,k}$ be
$x^{m_j}=s_j(\omega_n)^k$ when $k<n$ and for $k=n$ let
$\alpha_{j,k}=\alpha_{j,n}$ be the formula $x^{nm_j}\neq (s_j)^n$.
Then the formula $\bigwedge_{j=1}^{j=l}x^{m_j}\not=s_j$ is
equivalent to $\bigwedge_{j=1}^{j=l}\bigvee_{k=1}^{k=n}\alpha_{j,k}$
which is (by Propositional Logic) equivalent to
$\bigvee_{f:l\rightarrow n}\bigwedge_{j=1}^{j=l}\alpha_{j,f(j)}$
where $f:l\rightarrow n$ denotes a function from $\{1,\cdots,l\}$ to
$\{1,\cdots,n\}$. So the formula (B) is equivalent to {$\exists
x\,\big(x^n=t\wedge\bigvee_{f:l\rightarrow
n}\bigwedge_{j=1}^{j=l}\alpha_{j,f(j)}\big)$ or
$\bigvee_{f:l\rightarrow n} \exists x\,\big(
x^n=t\wedge\bigwedge_{j=1}^{j=l}\alpha_{j,f(j)} \big)$.} Hence, we
need to eliminate the quantifier of the formula $\exists
x\,\big(x^n=t\wedge\bigwedge_{j:f(j)=n}(x^n)^{m_j}\neq
(s_j)^n\wedge\bigwedge_{j:f(j)\neq
n}x^{m_j}=s_j(\omega_n)^{f(j)}\big)$. This formula is in turn
equivalent to
\newline\centerline{
$\bigwedge_{j:f(j)=n}t^{m_j}\neq (s_j)^n\wedge\exists
x\,\big(x^n=t\wedge\bigwedge_{j:f(j)\neq
n}x^{m_j}=s_j(\omega_n)^{f(j)}\big).$} So, it suffices to eliminate
the quantifier of the formulas in the form
\newline\centerline{(C)\qquad $\exists x\,\big(\bigwedge_ix^{k_i}=u_i\big)$.}
By the very same procedure that we got to the formula (B) from the
formula (A) we can see that the formula (C) is equivalent to a
formula of the form $\exists x\,(x^k=u)$, and finally this is
equivalent to the quantifier-free formula $0=0$ in
$\langle\mathbb{C},\cdot\rangle$.
\end{proof}
\section{Conclusions}
By a theorem of Tarski the theory of the structure
$\langle\mathbb{C},+,\cdot\rangle$ is decidable, whence so is the
theory of $\langle\mathbb{C},\cdot\rangle$. We showed a direct proof
of this fact in Theorem~\ref{th-c1}. Also Tarski showed the
decidability of the theory of the structure
$\langle\mathbb{R},+,\cdot\rangle$, and so the theory of the
structure $\langle\mathbb{R},\cdot\rangle$ is decidable too. We also
presented a direct proof of this fact in Theorem~\ref{th-r3}. The
references \cite{kk,bcr,bpc}  contain some beautiful proofs of
the above theorems of Tarski.

By G\"odel's incompleteness theorem the theory of  the structure
$\langle\mathbb{N},+,\cdot\rangle$ is not decidable (see e.g.
\cite{enderton} or \cite{smorynski} for a proof), and by
 the four square theorem of Lagrange the set $\mathbb{N}$
 is definable in $\langle\mathbb{Z},+,\cdot\rangle$:
{for every $n\in\mathbb{Z}$, $n\in\mathbb{N}\iff \exists\,\alpha,\beta,\gamma,\delta\,
(n=\alpha^2+\beta^2+\gamma^2+\delta^2).$}
Whence, the theory of the structure
$\langle\mathbb{Z},+,\cdot\rangle$ is not decidable as well. Also,
the theory of the structure $\langle\mathbb{Q},+,\cdot\rangle$ is
not decidable, since $\mathbb{Z}$ is definable in it (see
\cite{robinson}). However, the multiplicative theory
of natural and integer numbers, the theories of
$\langle\mathbb{N},\cdot\rangle$ and
$\langle\mathbb{Z},\cdot\rangle$, are decidable (see
\cite{cegielski,smorynski}). The decidability or
undecidability of the theory of $\langle\mathbb{Q},\cdot\rangle$ had
remained open in the literature. In Theorem~\ref{th-q1} we showed
the decidability of the theory of
$\langle\mathbb{Q}^+,\cdot\rangle$, and by the technique of the
proof of Theorem~\ref{th-r3} one can show the decidability of the
theory of $\langle\mathbb{Q},\cdot\rangle$ (Corollary~\ref{cor-q3}).


\end{document}